\newtheorem{theorem}{Theorem}
\newtheorem{lemma}{Lemma}
\newtheorem{definition}{Definition}
\newtheorem{remark}{Remark}
\newtheorem{example}{Example}
\title{A class of pseudoinverse-free greedy block nonlinear Kaczmarz methods for nonlinear systems of equations}
\author{Ying Lv, Wendi Bao, Lili Xing, Weiguo Li}
\begin{document}
	\maketitle
	\begin{abstract}
		In this paper, we construct a class of nonlinear greedy average block Kaczmarz methods to solve nonlinear problems without computing the Moore-Penrose pseudoinverse. This kind of methods adopts the average technique of Gaussian Kaczmarz method and combines with the greedy strategy, which greatly reduces the amount of computation. The convergence analysis and numerical experiments of the proposed methods are given. The numerical results show the effectiveness of the proposed methods.\\
		\newline
	\textbf{Keywords:} Nonlinear equations, Average technique, Nonlinear Kaczmarz algorithm, Block nonlinear Kaczmarz algorithm
\end{abstract} 
	\section{Introduction}
	Consider to find the roots of system of nonlinear equations
	\begin{equation}  
		f(x)=0,
	\end{equation}
where $f:\mathbb{R}^{n} \to \mathbb{R}^{m}$. We assume throughout that $f(x)=[f_{1}(x),\cdots,f_{m}(x)]^{T}\in \mathbb{R}^{m}$ is acontinuously differentiable vector-valued function, and $x=(x_{1},\cdots,x_{n})^{T}$ is a n-dimensional unknown vector. There exists a solution $x_{\ast}$ such that $f(x_{\ast})=0$. Such nonlinear problems exist in a wide range of practical applications such as machine learning\cite{machinelearning}, differential equations\cite{1}, convex optimization and deep neural networks\cite{DP}.\\
    \indent Recently, the Kaczmarz method\cite{Kaczmarz} has received a lot of attention, due to its simplicity and efficiency. The idea of the classical Kaczmarz method is to project the current point into the solution space given by a row of the coefficient matrix. In 2009, Strohmer and Vershynin proposed a randomized Kaczmarz (RK) method\cite{RK} by selecting the row index in a random order rather than a cycle order and proved that RK has a linear convergence rate. The discovery has sparked renewed interest in the Kaczmarz method\cite{rk1, rk2, rk3, rk4, rk5, rk6, rk7, rk8}. In order to further improve its convergence rate, two kinds of greedy rules are proposed in \cite{maxresidual, maxdistance}, namely maximum residual and maximum distance. Bai and Wu proposed a greedy randomized Kaczmarz method (GRK) \cite{GRK} to accelerate the convergence of the randomized Kaczmarz method. In GRK, in order to annihilate the larger component of the residual preferentially, they proposed the following greedy criterion for selecting the index set.
    \begin{small}
    	\begin{equation}
    		\mathcal{J}_{k} = \left\{ i_{k} \Bigg| \frac{\vert b_{i_{k}} - A_{i_{k}}x_{k} \vert^{2}}{\Vert b-Ax_{k} \Vert_2^{2}} \ge \frac{\Vert A_{i_{k}} \Vert_2^{2}}{2} \left( \frac{1}{\Vert b-Ax_{k} \Vert_2^{2}} \max_{1\le i_{k} \le m}\{\frac{\vert b_{i_{k}}-A_{i_{k}}x_{k} \vert^{2}}{\Vert A_{i_{k}} \Vert_2^{2}}\} +\frac{1}{\Vert A \Vert_F^{2}}\right)\right\}
    		\nonumber
    	\end{equation}
    \end{small}
   
     In order to accelerate the convergence of the classical Kaczmarz method, many researchers studied the block Kaczmarz method\cite{Block, block1, block2}. The idea of the block Kaczmarz method is to use several equations of linear system at the same time in each iteration. For the linear system $Ax=b$, the block method is to use a few rows of the coefficient matrix $A$ at each iteration. The block Kaczmarz method\cite{RBK} can be described as
    \begin{equation}
    		x_{k+1} = x_{k} + A_{\tau_{k}}^{\dagger}(b_{\tau_{k}} - A_{\tau_{k}}x_{k}),     k=0,1,2,\cdots,
    		\nonumber
    \end{equation}
where $A_{\tau_{k}}^{\dagger}$ represents the Moore-Penrose pseudoinverse of the chosen submatrix $A_{\tau_{k}}$ and $\tau_{k}$ is the block row indices. \\
    \indent However, each iteration in the block Kaczmarz method needs to compute the Moore-Penrose pseudoinverse, which ususlly costs expensively. Necoara\cite{Necoara} established a unified framework for the randomized average block Kaczmarz method by taking a convex combination of some updatings as a new direction\cite{FGBK}. The Gaussian Kaczmarz method\cite{GaussianK} can be regarded as another kind of block Kaczmarz method, that is
    \begin{equation}
    	x_{k+1} = x_{k} + \frac{\eta^{T}(b-Ax_{k})}{\Vert A^{T}\eta \Vert_2^{2}}A^{T}\eta,
    	\nonumber
    \end{equation} 
where $\eta$ is a Gaussian vector with mean $0 \in \mathbb{R}^{m}$ and the covariance matrix $I \in \mathbb{R}^{m \times m}$, i.e., $\eta \sim N(0,I)$.\\
    \indent A classical iterative method for solving nonlinear equations is Newton-Raphson method, whose iterative formula is as follows
    \begin{equation}
    	x_{k+1}=x_{k}-(f'(x_{k}))^{\dagger}f(x_{k}).
    \end{equation}
    where $f'(x)=[\nabla f_{1}(x),\cdots,\nabla f_{m}(x)]^{T} \in \mathbb{R}^{m \times n}$ is the Jacobian matrix of $f$ at $x$ and $\nabla f_{i}(x)^{T}$ is its $i$-th row, and $(f'(x_{k}))^{\dagger}$ is the Moore-Penrose pseudoinverse of $f'(x)$. Obviously, this method needs to calculate the entire Jacobian matrix and its Moore-Penrose pseudoinverse which leads to the expensive computation cost. Recently, Wang, Li and Bao generalized the randomized Kaczmarz to the nonlinear case and proposed the nonlinear Kaczmarz method\cite{NRK}, inspired by the Kaczmarz method which only uses one row of the coefficient matrix for each iteration. In \cite{NMR}, Zeng et al. presented a greedy selection strategy as follows:
    \begin{equation}
    	i_{k} = arg\max_{1\le i \le m} \vert f_{i}(x_{k}) \vert^{2}, 
    \end{equation}
    which aims at choosing the maximum component of the residual vector. They also showed that the algorithm with the greedy rule converges faster than NK and NURK methods in both theoretical analysis and experimental results.\\
    \indent In this paper, inspired by \cite{RBCNK} and \cite{MRBNK}, we generalize the pseudoinverse-free block Kaczmarz method for solving linear equations to nonlinear problems and combine greedy rules to further accelerate the convergence of the algorithms. Therefore, we construct a class of pseudoinverse-free greedy block nonlinear Kaczmarz methods with the average technique to avoid computing the pseudoinverse of the Jacobian matrix $f'(x)$ of $f(x)$, which are called the nonlinear greedy average block Kaczmarz (NGABK) method and the maximum residual nonlinear average block Kaczmarz (MRNABK) method. In NGABK method, to avoid calculating the Frobenius norm of the entire Jacobian, we refer to the second greedy rule in \cite{RBCNK}. In MRNABK method, we determine the index set according to the maximum residual rule. The convergence analyses of the two algorithms are given in detail. Numerical experiments show that our proposed methods are more effective than the previous methods. In most cases, the MRNABK method is better than the NGABK method, and both of them are better than several state-of-the-art solvers.\\
    \indent The rest of this paper is organized as follows. In Section \ref{section2}, the notations and preliminaries are provided. In Section \ref{section3}, we provide the two pseudoinverse-free greedy block nonlinear Kaczmarz methods and establish their convergence theorems. The numerical experiments are given in Section \ref{section4}. Finally, we make a summary of the full work in Section \ref{section5}. 
      
	\section{Notations and preliminaries}\label{section2}
	For any matrix $A \in \mathbb{R}^{m \times n}$, we use $\sigma_{max}(A)$, $\sigma_{min}(A)$, $\Vert A \Vert_2$, $\Vert A \Vert_F=\sqrt{\sum_{i=1}^m \sum_{j=1}^n \vert a_{ij} \vert^{2}}$, $A^{\dagger}$ and $A_{\tau}$ to denote the maximum and minimum nonzero singular values of $A$, the spectral norm, the Frobenius norm, the Moore-Penrose pseudoinverse, the row submatrix of matrix $A$ indexed by index set $\tau$. $\vert \tau_{k} \vert$ is the cardinal number of the set $\tau_{k}$. $r_{k}$ denotes the residual vector of the $k$th iteration. For an integer $m \ge 1$, let $[m]:=\{1,\dots,m\}$. For any random variable $\xi$, we use $\mathbb{E}(\xi)$ to denote the expectation of $\xi$.
	\begin{definition}[\cite{NRK}]\label{defi1}
		If every $i \in [m]$ and $\forall x_{1},x_{2} \in \mathbb{R}^{n}$, there exists $\xi_{i} \in [0,\xi)$ satisfying $\xi=\max\limits_{i}\xi_{i} < \frac{1}{2}$ such that
		\begin{equation}
			\vert f_{i}(x_{1})-f_{i}(x_{2})-\nabla f_{i}(x_{1})^{T}(x_{1}-x_{2}) \vert \le \xi_{i}\vert f_{i}(x_{1})-f_{i}(x_{2}) \vert,
		\end{equation}
	then the function f : $\mathbb{R}^{n} \to \mathbb{R}^{m}$ is referred to satisfy the local tangential cone condition.
	\end{definition}
    
    \begin{lemma}[\cite{lihanyu}]\label{yinli1}
        If the function f satisfies the local tangential cone condition, then for $\forall x_{1},x_{2} \in \mathbb{R}^{n}$ and an index subset $\tau \subseteq [m]$, we have
        \begin{equation}
              \Vert f_{\tau}(x_{1}) - f_{\tau}(x_{2}) \Vert_2^{2} \ge \dfrac{1}{1+\xi^{2}}\Vert f'_{\tau}(x_{1})(x_{1}-x_{2}) \Vert_2^2.
        \end{equation}
    \end{lemma}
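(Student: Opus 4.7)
The plan is to derive the block inequality from the coordinate-wise tangential cone condition by summing over $\tau$ and then converting the resulting residual bound into one comparing $\|f_\tau(x_1) - f_\tau(x_2)\|_2^2$ with $\|f'_\tau(x_1)(x_1-x_2)\|_2^2$.

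In detail, for each $i \in \tau$ I would introduce $a_i = f_i(x_1) - f_i(x_2)$, $b_i = \nabla f_i(x_1)^T(x_1-x_2)$, and $r_i = a_i - b_i$, so that Definition \ref{defi1} reads $r_i^2 \le \xi_i^2 a_i^2 \le \xi^2 a_i^2$. Summing over $i \in \tau$ produces the block tangential cone inequality
\begin{equation*}
\|f_\tau(x_1) - f_\tau(x_2) - f'_\tau(x_1)(x_1-x_2)\|_2^2 \le \xi^2 \|f_\tau(x_1) - f_\tau(x_2)\|_2^2,
\end{equation*}
since the tangential cone condition decouples across indices. Writing $f'_\tau(x_1)(x_1-x_2) = (f_\tau(x_1) - f_\tau(x_2)) - r$ with $r = (r_i)_{i \in \tau}$, I would then expand
\begin{equation*}
\|f'_\tau(x_1)(x_1-x_2)\|_2^2 = \|f_\tau(x_1) - f_\tau(x_2)\|_2^2 - 2\langle f_\tau(x_1)-f_\tau(x_2), r\rangle + \|r\|_2^2,
\end{equation*}
and combine with the block residual bound to produce an upper bound on $\|f'_\tau(x_1)(x_1-x_2)\|_2^2$ of the form $C(\xi)\|f_\tau(x_1)-f_\tau(x_2)\|_2^2$, which rearranges to the stated lower bound.

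The main obstacle is controlling the cross term $\langle f_\tau(x_1)-f_\tau(x_2), r\rangle$ sharply enough. A direct Cauchy--Schwarz together with the block residual bound only yields $\|f'_\tau(x_1)(x_1-x_2)\|_2 \le (1+\xi)\|f_\tau(x_1)-f_\tau(x_2)\|_2$, i.e.\ the weaker factor $(1+\xi)^2$ rather than the asserted $1+\xi^2$. To recover the tighter constant I would exploit the sign information inherent in Definition \ref{defi1}: because $\xi_i < 1/2$, the scalars $b_i$ and $a_i$ share a sign with $|b_i| \in [(1-\xi_i)|a_i|,(1+\xi_i)|a_i|]$, so $a_i b_i \ge (1-\xi_i)a_i^2 \ge 0$. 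Coupling this with Young's inequality at an optimized parameter (or equivalently completing the square on $\sum_{i\in\tau} a_i b_i$) should absorb the cross term into the two remaining pieces with the correct constant, as in the argument of \cite{lihanyu}.
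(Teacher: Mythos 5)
The paper itself offers no proof of this lemma---it is imported verbatim from \cite{lihanyu}---so your attempt can only be judged on its own terms, and it contains a genuine gap precisely at the step you yourself flag as the ``main obstacle.'' Your setup is fine: with $a_i=f_i(x_1)-f_i(x_2)$, $b_i=\nabla f_i(x_1)^T(x_1-x_2)$, $r_i=a_i-b_i$, Definition \ref{defi1} gives $\vert r_i\vert\le\xi_i\vert a_i\vert$, summation gives $\Vert r\Vert_2^2\le\xi^2\Vert a\Vert_2^2$, and the expansion $\Vert b\Vert_2^2=\Vert a\Vert_2^2-2\langle a,r\rangle+\Vert r\Vert_2^2$ is correct. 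But the proposed repair of the cross term does not work. To reach the constant $1+\xi^2$ you would need $\langle a,r\rangle\ge 0$, i.e.\ an upper bound $a_ib_i\le a_i^2$; the sign information you extract from Definition \ref{defi1} goes the other way: $a_ib_i\ge(1-\xi_i)a_i^2$ is equivalent to $a_ir_i\le\xi_ia_i^2$, which is an \emph{upper} bound on $a_ir_i$ and hence a \emph{lower} bound on $-2\langle a,r\rangle$ --- useless for bounding $\Vert b\Vert_2^2$ from above. The only available control in the needed direction is $a_ir_i\ge-\xi_ia_i^2$ from Cauchy--Schwarz, and optimizing Young's inequality then returns exactly $\Vert b\Vert_2^2\le(1+\xi)^2\Vert a\Vert_2^2$, the bound you already identified as too weak. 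No choice of parameter can do better, because that bound is tight under the hypotheses: the configuration $b_i=(1+\xi_i)a_i$ (equivalently $r_i=-\xi_ia_i$) satisfies $\vert r_i\vert\le\xi_i\vert a_i\vert$ and also your sign condition $a_ib_i\ge(1-\xi_i)a_i^2$, yet gives $\Vert b\Vert_2^2=(1+\xi)^2\Vert a\Vert_2^2>(1+\xi^2)\Vert a\Vert_2^2$.

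The upshot is that the inequality with constant $\frac{1}{1+\xi^2}$ is not derivable from Definition \ref{defi1} alone (already for $\vert\tau\vert=1$ it fails for the configuration above), so no completion of your argument can succeed as stated; what your method correctly proves is
\begin{equation}
\Vert f_{\tau}(x_{1})-f_{\tau}(x_{2})\Vert_2^{2}\ \ge\ \frac{1}{(1+\xi)^{2}}\,\Vert f'_{\tau}(x_{1})(x_{1}-x_{2})\Vert_2^{2},
\end{equation}
obtained coordinate-wise from $\vert b_i\vert\le(1+\xi_i)\vert a_i\vert$ without any cross-term analysis. That weaker constant would still support the convergence theorems in Section \ref{section4} after replacing $1+\xi^2$ by $(1+\xi)^2$ throughout. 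If you want to defend the constant $1+\xi^2$, you must either locate an additional hypothesis in \cite{lihanyu} guaranteeing $\langle f_\tau(x_1)-f_\tau(x_2),\,r\rangle\ge0$, or accept that the lemma as quoted is misstated; you should say explicitly which, rather than asserting that Young's inequality ``should'' close the gap.
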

	\section{Pseudoinverse-free greedy block nonlinear Kaczmarz methods}\label{section3}
	First of all, we briefly introduce the derivation process of Gaussian Kaczmarz method. A random matrix $S \in \mathbb{R}^{m \times q}$ is drawn in an i.i.d. fashion at each iteration, and $q$ is a random variable. For a matrix $B \in \mathbb{R}^{n \times n}$, the $B$-inner product and the induced $B$-norm are defined as follows:
	\begin{equation}
		\langle x,y \rangle_{B}=\langle Bx,y \rangle ,\quad \Vert B \Vert_B = \sqrt{\langle x,x \rangle_{B}} .
		\nonumber
	\end{equation} 
	 \indent For the linear system $Ax=b$, we apply sketch-project technique to this linear system to obtain the following framework\cite{GaussianK},
\begin{equation}
	x_{k+1} = arg\min_{x \in \mathbb{R}^{n}} \Vert x-x_{k} \Vert_B^{2}\quad \rm subject\ to \quad \mathit{S^{T}Ax=S^{T}b}.
	\nonumber
\end{equation}
From the algebraic point of view, the problem can also be written in the following form,
\begin{equation}
	x_{k+1} =\rm\ solution \ of \ \mathit{S^{T}Ax=S^{T}b, \ x=x_{k} + B^{-1}A^{T}Sy}. \label{eq8}
\end{equation}
\indent By substituting the second equation in \eqref{eq8} into the first equation, we have $(S^{T}AB^{-1}A^{T}S)y = S^{T}(b-Ax_{k})$. Notice that all of the solutions $y$ in this system are satisfied \eqref{eq8}. We choose the minimal Euclidean norm solution $y=y_{k}$, which is given by $y_{k} = (S^{T}AB^{-1}A^{T}S)^{\dagger}S^{T}(b-Ax_{k})$. So, we have the following iteration,
\begin{equation}
	x_{k+1} = x_{k} - B^{-1}A^{T}S(S^{T}AB^{-1}A^{T}S)^{\dagger}S^{T}(Ax_{k}-b).\label{eq9}
\end{equation}
	\indent When $S$ is a Gaussian vector with mean 0 $\in \mathbb{R}^{m}$ and a positive definite covariance matrix $\Sigma \in \mathbb{R}^{m \times m}$, specifically, $S = \zeta \sim N(0,\Sigma)$, we plugging $S$ into \eqref{eq9} gives us the following result,
	\begin{equation}
		x_{k+1} = x_{k} - \frac{\zeta^{T}(Ax_{k}-b)}{\zeta^{T}AB^{-1}A^{T}\zeta}B^{-1}A^{T}\zeta.\label{eq10}
	\end{equation}
Let $B=I$, and choose $\Sigma = I$ so that $S=\eta \sim N(0,I)$. Then \eqref{eq10} has the form
\begin{equation}
	x_{k+1} = x_{k} - \frac{\eta^{T}(Ax_{k}-b)}{\Vert A^{T}\eta \Vert_2^{2}}A^{T}\eta
	\nonumber
\end{equation}
	which is the Gaussian Kaczmarz (GK) method.\\
	\indent Next, the nonlinear case is similar to the derivation above. By applying the sketch-project technique to the Newton-Raphson(NR) method, we obtain the Sketched Newton-Raphson(SNR) method\cite{SNR}. We project the current point $x_{k}$ onto the solution space of the Newton system
	\begin{equation}
		x_{k+1} = arg\min_{x \in \mathbb{R}^{n}}\Vert x-x_{k} \Vert^{2} \quad s.\ t. \quad f'(x_{k})(x-x_{k}) = f(x_{k}),
\nonumber	
\end{equation}
which is the classic Newton-Raphson(NR) method. Similar to the NR method, we project the current point $x_{k}$ onto the solution space of the sketched Newton system
\begin{equation}
	x_{k+1} = arg\min_{x \in \mathbb{R}^{n}}\Vert x-x_{k} \Vert^{2} \quad s.\ t. \quad S_{k}^{T}f'(x_{k})(x-x_{k}) = S_{k}^{T}f(x_{k}),
	\nonumber
\end{equation}
in which $S_{k}^{T} \in \mathbb{R}^{\tau \times m}$ is the sketching matrix. Specifically, the formula of SNR method can be written as follows,
\begin{equation}
	x_{k+1} = x_{k} - (f'(x_{k}))^{T}S_{k}(S_{k}^{T}f'(x_{k})(f'(x_{k}))^{T}S_{k})^{\dagger}S_{k}^{T}f(x_{k}).\label{eq14}
\end{equation}
	In \eqref{eq14}, we set $S=\eta \sim N(0,I)$. Then \eqref{eq14} has the form
	\begin{equation}
			x_{k+1}=x_{k}-\frac{\eta_{k}^{T}f(x_{k})}{\Vert f^{'}(x_{k})^{T}\eta_{k} \Vert_2^{2}}f'(x_{k})^{T}\eta_{k},
			\nonumber
	\end{equation}
	which is the iterative formula in this paper.
	
	 Based on the residual-distance capped nonlinear Kaczmarz (RD-CNK) method\cite{RBCNK}, we present a class of pseudoinverse-free greedy block nonlinear Kaczmarz methods for solving nonlinear systems. The block indices $\mathcal{I}_{k}$ is chosen by
	\begin{equation}
		\mathcal{I}_{k}=\{ i\mid \vert f_{i}(x_{k}) \vert^{2} \ge \delta_{k}\Vert f(x_{k}) \Vert_2^{2} \Vert  \nabla f_{i}(x_{k}) \Vert_2^{2} \}
		\nonumber
	\end{equation}
with
    \begin{equation}
    	\delta_{k}=\dfrac{1}{2}\left( \frac{1}{\Vert f(x_{k}) \Vert_2^{2}}\max\limits_{i\in [m]}\frac{\vert f_{i}(x_k) \vert^{2}}{\Vert  \nabla f_{i}(x_{k}) \Vert_2^{2}} + \frac{1}{\Vert f'(x_{k}) \Vert_F^{2}} \right).
    	\nonumber
    \end{equation}
    \indent Considering that the above selection rule requires the calculation of the Frobenius norm of the entire Jacobian matrix, this will result in a large amount of computation. Therefore, we choose the block indices $\tau_{k}$ by
    \begin{equation}
    	\tau_{k}=\{i\vert f_{i}(x_{k}) \vert^{2}\geq \delta_{k}\Vert f(x_{k}) \Vert_2^{2}\}
    	\nonumber
    \end{equation}
with
    \begin{equation}
    	\delta_{k}=\frac{1}{2}(\frac{\max\limits_{i\in m}\lvert f_{i}(x_{k}) \rvert^{2}}{\Vert f(x_{k}) \Vert_2^{2}}+\frac{1}{m}).
    	\nonumber
    \end{equation}
\indent Given an initial guess vector $x_{0} \in \mathbb{R}^{n}$, the nonlinear greedy average block Kaczmarz  method is described in Algorithm \ref{alg:algorithm1}.
	\begin{algorithm}[t]
	\caption{The NGABK algorithm}
	\label{alg:algorithm1}
	\begin{algorithmic}[1]
		\REQUIRE{The initial estimate $x_{0}\in \mathbb{R}^{n}$.}
		\STATE $\mathbf{for}$ $k$=1,2,$\cdots$ until convergence, $\mathbf{do}$
		\STATE Compute
		\begin{equation}
			\delta_{k}=\frac{1}{2}\big(\frac{\max\limits_{i\in m}\lvert f_{i}(x_{k}) \rvert^{2}}{\Vert f(x_{k}) \Vert_2^{2}}+\frac{1}{m}\big)
		\end{equation}
		\STATE Determine the index subset
		\begin{equation}
			\tau_{k}=\{i\big|\vert f_{i}(x_{k}) \vert^{2}\geq \delta_{k}\Vert f(x_{k}) \Vert_2^{2}\}
		\end{equation}
		\STATE Compute
		\begin{equation}
			\eta_{k}=\sum_{i \in \tau_{k}}(-f_{i}(x_{k}))e_{i}
		\end{equation}
		\STATE Set
		\begin{equation}\label{gongshi}
			x_{k+1}=x_{k}-\frac{\eta_{k}^{T}f(x_{k})}{\Vert f^{'}(x_{k})^{T}\eta_{k} \Vert_2^{2}}f'(x_{k})^{T}\eta_{k}
		\end{equation}
		\STATE $\mathbf{end \ for}$
		\BlankLine
	\end{algorithmic}
\end{algorithm}
    \begin{remark}
    	The method is well defined as the index set $\tau_{k}$ is always nonempty.This is because
    	\begin{equation}
    		\max\limits_{i\in [m]}\vert f_{i}(x_{k}) \vert^{2} \ge \sum_{i=1}^{m}\frac{\vert f_{i}(x_{k}) \vert^{2}}{m} = \frac{\Vert f(x_{k}) \Vert_2^{2}}{m}
    		\nonumber
    	\end{equation}
    and then
    \begin{equation}
    	\vert f_{i_{k}}(x_{k}) \vert^{2} = \max\limits_{i\in [m]}\vert f_{i}(x_{k}) \vert^{2} \ge \frac{1}{2} \left( \max\limits_{i\in [m]}\vert f_{i}(x_{k}) \vert^{2} + \frac{\Vert f(x_{k}) \Vert_2^{2}}{m}  \right)
    	\nonumber
    \end{equation}
implies $i_{k} \in \tau_{k}$.
    \end{remark}
\indent According to the maximum residual rule, we establish the maximum residual nonlinear average block Kaczmarz in Algorithm \ref{alg:algorithm2}. In this method, $\varrho$ is the relaxation parameter, which can be determined in numerical experiments. It is obvious that in the $k$-th iteration of the Algorithm \ref{alg:algorithm2}, the set $\tau_{k}$ is also non-empty. This is because\\
\begin{equation}
	\vert f_{i_{k}}(x_{k}) \vert = \max_{1\le i \le m}\vert f_{i}(x_{k}) \vert.
	\nonumber
\end{equation}
That is to say, the largest residual component $i_{k}$ is always in the set $\tau_{k}$.\\
	\begin{algorithm}[t]
	\caption{The MRNABK algorithm}
	\label{alg:algorithm2}
	\begin{algorithmic}[1]
		\REQUIRE{The initial estimate $x_{0}\in \mathbb{R}^{n}$.}
		\STATE $\mathbf{for}$ $k$=1,2,$\cdots$ until convergence, $\mathbf{do}$
		\STATE Determine the index subset
		\begin{equation}
			\tau_{k}=\{i \big| \vert f_{i}(x_{k}) \vert^{2}\geq \varrho \max_{1\le i \le m}\vert f(x_{k}) \vert^{2}\}
		\end{equation}
		\STATE Compute
		\begin{equation}
			\eta_{k}=\sum_{i \in \tau_{k}}(-f_{i}(x_{k}))e_{i}
		\end{equation}
		\STATE Set
		\begin{equation}\label{update formula}
			x_{k+1}=x_{k}-\frac{\eta_{k}^{T}f(x_{k})}{\Vert f^{'}(x_{k})^{T}\eta_{k} \Vert_2^{2}}f'(x_{k})^{T}\eta_{k}
		\end{equation}
		\STATE $\mathbf{end \ for}$
		\BlankLine
	\end{algorithmic}
\end{algorithm}
\indent From Algorithm \ref{alg:algorithm2}, we can see that the larger components of the residual are eliminated preferentially, which greatly improves the efficiency of the algorithm. We establish its convergence theorem in Section \ref{section4}.
	\section{Convergence analysis}\label{section4}
	\begin{lemma}\label{yinli2}
		If the function f satisfies the local tangential cone condition, then for $i \in \left[ m \right]$, $\xi = \max\limits_{i\in [m]} \xi_{i} < \dfrac{1}{2}$, $\forall x_{1},x_{2} \in \mathbb{R}^{n}$ and the updating formula (\ref{gongshi}), we have
		\begin{equation}
			\Vert x_{k+1}-x_{\ast} \Vert_2^{2} \le \Vert x_{k}-x_{\ast} \Vert_2^{2} - (1-2\xi)\frac{\vert \eta_{k}^{T}f(x_{k}) \vert^{2}}{\Vert f^{'}(x_{k})^{T}\eta_{k} \Vert_2^{2}}.
		\end{equation}
	\end{lemma}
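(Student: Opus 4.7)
The plan is to start from the update formula \eqref{gongshi} and expand the squared distance $\Vert x_{k+1}-x_{\ast}\Vert_2^2$ directly. Writing $x_{k+1} - x_{\ast} = (x_k - x_{\ast}) - \alpha_k f'(x_k)^T\eta_k$ with $\alpha_k = \eta_k^T f(x_k)/\Vert f'(x_k)^T\eta_k\Vert_2^2$, the expansion produces the three standard pieces: $\Vert x_k-x_{\ast}\Vert_2^2$, a cross term $-2\alpha_k \eta_k^T f'(x_k)(x_k - x_{\ast})$, and a quadratic term equal to $|\eta_k^T f(x_k)|^2/\Vert f'(x_k)^T\eta_k\Vert_2^2$. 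The proof then reduces to producing a sharp upper bound on the cross term.

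To control $\eta_k^T f'(x_k)(x_k - x_{\ast})$, I would apply the local tangential cone condition (Definition \ref{defi1}) rowwise with $x_1 = x_k$ and $x_2 = x_{\ast}$, using $f_i(x_{\ast})=0$. This yields $|f_i(x_k) - \nabla f_i(x_k)^T(x_k - x_{\ast})| \le \xi_i |f_i(x_k)| \le \xi |f_i(x_k)|$ for each $i$. Since $\eta_k = \sum_{i \in \tau_k}(-f_i(x_k)) e_i$, multiplying the $i$-th bound by $-f_i(x_k)$ and summing over $i \in \tau_k$ gives, via the triangle inequality,
\[
\Bigl|\eta_k^T f'(x_k)(x_k - x_{\ast}) - \eta_k^T f(x_k)\Bigr| \le \xi \sum_{i \in \tau_k} |f_i(x_k)|^2 = \xi\,|\eta_k^T f(x_k)|,
\]
using $\eta_k^T f(x_k) = -\sum_{i\in\tau_k} f_i(x_k)^2 \le 0$ for the final equality.

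Since $\eta_k^T f(x_k) \le 0$, the displayed bound rearranges to the one-sided estimate $\eta_k^T f'(x_k)(x_k - x_{\ast}) \le (1-\xi)\eta_k^T f(x_k)$. The scalar $\alpha_k$ also satisfies $\alpha_k \le 0$, so $-2\alpha_k \ge 0$; multiplying the estimate through by $-2\alpha_k$ therefore preserves the direction of the inequality, yielding
\[
-2\alpha_k\,\eta_k^T f'(x_k)(x_k - x_{\ast}) \le -2(1-\xi)\,\frac{|\eta_k^T f(x_k)|^2}{\Vert f'(x_k)^T\eta_k\Vert_2^2}.
\]
Adding the quadratic term collapses the coefficient to $-(2(1-\xi)-1) = -(1-2\xi)$, which is exactly the desired inequality.

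The main subtlety is sign tracking: $\eta_k^T f(x_k)$ is nonpositive, so one must carefully choose the direction of the tangential-cone inequality that produces an \emph{upper} bound on $\Vert x_{k+1}-x_{\ast}\Vert_2^2$ after multiplication by the nonpositive scalar $-2\alpha_k$. The hypothesis $\xi<\tfrac{1}{2}$ enters only at the very end, guaranteeing that $1-2\xi>0$ so that the final descent term is genuinely nonpositive; everything else is routine algebra and does not require Lemma \ref{yinli1}.
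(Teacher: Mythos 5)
Your argument is correct and follows essentially the same route as the paper: expand $\Vert x_{k+1}-x_{\ast}\Vert_2^2$ from the update formula, apply the tangential cone condition rowwise with $f_i(x_{\ast})=0$ to bound the cross term by $\xi\sum_{i\in\tau_k}|f_i(x_k)|^2 = \xi|\eta_k^T f(x_k)|$, and combine the coefficients into $-(1-2\xi)$. Your sign bookkeeping (noting $\eta_k^T f(x_k)\le 0$ and $-2\alpha_k\ge 0$) is sound, and your observation that Lemma~\ref{yinli1} is not needed here matches the paper's proof.
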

\begin{proof}
	From the updating fomula \eqref{update formula}, we have
	\begin{equation}
		\begin{aligned}
		\Vert x_{k+1}-x_{\ast} \Vert_2^{2} &= \Vert x_{k} - \frac{\eta_{k}^{T}f(x_{k})}{\Vert f^{'}(x_{k})^{T}\eta_{k} \Vert_2^{2}}f'(x_{k})^{T}\eta_{k} - x_{\ast} \Vert_2^{2}\\
		&= \Vert x_{k}-x_{\ast} \Vert_2^{2}-2\left< \frac{\eta_{k}^{T}f(x_{k})}{\Vert f^{'}(x_{k})^{T}\eta_{k} \Vert_2^{2}}f'(x_{k})^{T}\eta_{k},x_{k}-x_{\ast} \right> + \frac{\vert \eta_{k}^{T}f(x_{k}) \vert^{2}}{\Vert f^{'}(x_{k})^{T}\eta_{k} \Vert_2^{2}}\\
		\nonumber
		\end{aligned}
	\end{equation}
According to the definition of $\eta_{k}$, we have 
    \begin{equation}
	\begin{aligned}
		&\Vert x_{k+1}-x_{\ast} \Vert_2^{2}\\
		& = \Vert x_{k}-x_{\ast} \Vert_2^{2} - 2\frac{\eta_{k}^{T}f(x_{k})}{\Vert f^{'}(x_{k})^{T}\eta_{k} \Vert_2^{2}}(-\sum_{i \in \tau_{k}}f_{i}(x_{k})e_{i}^{T}(f'(x_{k})))(x_{k}-x_{\ast})\\
		&+ \frac{\vert \eta_{k}^{T}f(x_{k}) \vert^{2}}{\Vert f^{'}(x_{k})^{T}\eta_{k} \Vert_2^{2}}\\
		&= \Vert x_{k}-x_{\ast} \Vert_2^{2} + 2\frac{\eta_{k}^{T}f(x_{k})}{\Vert f^{'}(x_{k})^{T}\eta_{k} \Vert_2^{2}}(\sum_{i \in \tau_{k}}f_{i}(x_{k})\nabla f_{i}(x_{k})^{T}))(x_{k}-x_{\ast}) + \\
		&\frac{\vert \eta_{k}^{T}f(x_{k}) \vert^{2}}{\Vert f^{'}(x_{k})^{T}\eta_{k} \Vert_2^{2}}\\
		&= \Vert x_{k}-x_{\ast} \Vert_2^{2} - 2\frac{\eta_{k}^{T}f(x_{k})}{\Vert f^{'}(x_{k})^{T}\eta_{k} \Vert_2^{2}}\sum_{i \in \tau_{k}}f_{i}(x_{k})(f_{i}(x_{k})-f_{i}(x_{\ast})-\\
		&\nabla f_{i}(x_{k})^{T}(x_{k}-x_{\ast}))+ 2\frac{\eta_{k}^{T}f(x_{k})}{\Vert f^{'}(x_{k})^{T}\eta_{k} \Vert_2^{2}}\sum_{i \in \tau_{k}}f_{i}^{2}(x_{k})+\frac{\vert \eta_{k}^{T}f(x_{k}) \vert^{2}}{\Vert f^{'}(x_{k})^{T}\eta_{k} \Vert_2^{2}}.\\
		\nonumber
	\end{aligned}
    \end{equation}
From Definition \ref{defi1}, we have
    \begin{equation}
    	\begin{aligned}
    		\Vert x_{k+1}-x_{\ast} \Vert_2^{2} &\le \Vert x_{k}-x_{\ast} \Vert_2^{2} + 2\frac{\vert \eta_{k}^{T}f(x_{k}) \vert^{2}}{\Vert f^{'}(x_{k})^{T}\eta_{k} \Vert_2^{2}}\xi - \frac{\vert \eta_{k}^{T}f(x_{k}) \vert^{2}}{\Vert f^{'}(x_{k})^{T}\eta_{k} \Vert_2^{2}}\\
    		&= \Vert x_{k}-x_{\ast} \Vert_2^{2} - (1-2\xi)\frac{\vert \eta_{k}^{T}f(x_{k}) \vert^{2}}{\Vert f^{'}(x_{k})^{T}\eta_{k} \Vert_2^{2}}.
    		\nonumber
    	\end{aligned}
    \end{equation}
\end{proof}
\begin{theorem}
	If the nonlinear function f satisfies the local tangential cone condition given in Definition \ref{defi1}, $\xi = \max\limits_{i\in [m]} \xi_{i} < \dfrac{1}{2}$, $f(x_{\ast}) = 0$, and $f'(x)$ is a full column rank matrix, then the iterations of the NGABK method in Algorithm \ref{alg:algorithm1} satisfy
	\begin{equation}
		\Vert x_{k+1}-x_{\ast} \Vert_2^{2} \le \left( 1-\dfrac{1-2\xi}{1+\xi^{2}}\dfrac{\delta_{k}\vert \tau_{k} \vert \sigma_{min}^{2}(f'(x_{k}))}{\sigma_{max}^{2}(f'_{\tau_{k}}(x_{k}))} \right)\Vert x_{k}-x_{\ast} \Vert_2^{2}.
	\end{equation}
\end{theorem}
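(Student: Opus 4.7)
The plan is to start from Lemma 2, which already gives
\[
\Vert x_{k+1}-x_{\ast}\Vert_2^{2}\le \Vert x_{k}-x_{\ast}\Vert_2^{2}-(1-2\xi)\frac{|\eta_{k}^{T}f(x_{k})|^{2}}{\Vert f'(x_{k})^{T}\eta_{k}\Vert_2^{2}},
\]
and then bound the ratio on the right from below in terms of $\Vert x_{k}-x_{\ast}\Vert_2^{2}$. Since $\eta_{k}=\sum_{i\in\tau_{k}}(-f_{i}(x_{k}))e_{i}$, a direct computation shows $\eta_{k}^{T}f(x_{k})=-\sum_{i\in\tau_{k}}f_{i}(x_{k})^{2}=-\Vert f_{\tau_{k}}(x_{k})\Vert_2^{2}$, hence $|\eta_{k}^{T}f(x_{k})|^{2}=\Vert f_{\tau_{k}}(x_{k})\Vert_2^{4}$. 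Similarly $f'(x_{k})^{T}\eta_{k}=-f'_{\tau_{k}}(x_{k})^{T}f_{\tau_{k}}(x_{k})$, so by the definition of the spectral norm
\[
\Vert f'(x_{k})^{T}\eta_{k}\Vert_2^{2}\le \sigma_{\max}^{2}(f'_{\tau_{k}}(x_{k}))\,\Vert f_{\tau_{k}}(x_{k})\Vert_2^{2}.
\]
Dividing yields
\[
\frac{|\eta_{k}^{T}f(x_{k})|^{2}}{\Vert f'(x_{k})^{T}\eta_{k}\Vert_2^{2}}\ge \frac{\Vert f_{\tau_{k}}(x_{k})\Vert_2^{2}}{\sigma_{\max}^{2}(f'_{\tau_{k}}(x_{k}))}.
\]

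The next step is to relate $\Vert f_{\tau_{k}}(x_{k})\Vert_2^{2}$ to $\Vert f(x_{k})\Vert_2^{2}$ using the greedy selection rule. Since every $i\in\tau_{k}$ satisfies $|f_{i}(x_{k})|^{2}\ge \delta_{k}\Vert f(x_{k})\Vert_2^{2}$, summing over $\tau_{k}$ gives
\[
\Vert f_{\tau_{k}}(x_{k})\Vert_2^{2}\ge \delta_{k}|\tau_{k}|\,\Vert f(x_{k})\Vert_2^{2}.
\]
Then I would use Lemma 1 with $\tau=[m]$ and $x_{1}=x_{k}$, $x_{2}=x_{\ast}$, together with $f(x_{\ast})=0$ and the full column rank assumption on $f'(x_{k})$, to obtain
\[
\Vert f(x_{k})\Vert_2^{2}\ge \frac{1}{1+\xi^{2}}\Vert f'(x_{k})(x_{k}-x_{\ast})\Vert_2^{2}\ge \frac{\sigma_{\min}^{2}(f'(x_{k}))}{1+\xi^{2}}\Vert x_{k}-x_{\ast}\Vert_2^{2}.
\]

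Chaining these three bounds gives
\[
\frac{|\eta_{k}^{T}f(x_{k})|^{2}}{\Vert f'(x_{k})^{T}\eta_{k}\Vert_2^{2}}\ge \frac{\delta_{k}|\tau_{k}|\,\sigma_{\min}^{2}(f'(x_{k}))}{(1+\xi^{2})\,\sigma_{\max}^{2}(f'_{\tau_{k}}(x_{k}))}\Vert x_{k}-x_{\ast}\Vert_2^{2},
\]
and substituting into the inequality from Lemma 2 yields precisely the stated recursion. There is no genuinely hard step; the proof is essentially a bookkeeping exercise. The one place that warrants care is the Cauchy--Schwarz/spectral-norm bound on $\Vert f'(x_{k})^{T}\eta_{k}\Vert_2^{2}$, because using the full $\sigma_{\max}(f'(x_{k}))$ there would give a weaker result, whereas noting that $\eta_{k}$ is supported on $\tau_{k}$ lets us replace it by $\sigma_{\max}(f'_{\tau_{k}}(x_{k}))$, which is what appears in the theorem.
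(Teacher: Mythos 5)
Your proposal is correct and follows essentially the same route as the paper: the paper also bounds $\Vert f'(x_k)^T\eta_k\Vert_2^2$ by $\sigma_{\max}^2(f'_{\tau_k}(x_k))$ times the squared norm of the restricted residual (written there as $\hat{\eta_k}=E_k^T\eta_k$, which equals $-f_{\tau_k}(x_k)$), applies the greedy selection rule to get the factor $\delta_k|\tau_k|\Vert f(x_k)\Vert_2^2$, and then invokes Lemma \ref{yinli1} with $\tau=[m]$ and Lemma \ref{yinli2} exactly as you do. The only difference is notational.
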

\begin{proof}
		Let $E_{k} \in \mathbb{R}^{m \times \vert \tau_{k} \vert}$ be the matrix whose columns consist of all the vectors $e_{i} \in \mathbb{R}^{m}$ with $i \in \tau_{k}$. Denote $f'_{\tau_{k}}(x_{k}) = E_{k}^{T}f'(x_{k})$, $\hat{\eta_{k}} = E_{k}^{T}\eta_{k}$, then
\begin{equation}
	\Vert \hat{\eta_{k}} \Vert_2^{2} = \eta_{k}^{T}E_{k}E_{k}^{T}\eta_{k} = \Vert \eta_{k} \Vert_2^{2} = \sum_{i \in \tau_{k}}\vert f_{i}(x_{k}) \vert^{2}
\end{equation}
and
		\begin{equation}
			\begin{aligned}
			\Vert f'(x_{k})^{T}\eta_{k} \Vert_2^{2} &= \eta_{k}^{T}f'(x_{k})f'(x_{k})^{T}\eta_{k} = \hat{\eta_{k}}^{T}E_{k}^{T}f'(x_{k})f'(x_{k})^{T}E_{k}\hat{\eta_{k}} \\
			&= \hat{\eta_{k}}^{T} f'_{\tau_{k}}(x_{k}) f'_{\tau_{k}}(x_{k})^{T} \hat{\eta_{k}} = \Vert f'_{\tau_{k}}(x_{k})\hat{\eta_{k}} \Vert_2^{2}.
			\nonumber
			\end{aligned}
		\end{equation}
Therefore, we have
\begin{equation}
	\Vert f'_{\tau_{k}}(x_{k})^{T}\hat{\eta_{k}} \Vert_2^{2} = \hat{\eta_{k}}^{T} f'_{\tau_{k}}(x_{k}) f'_{\tau_{k}}(x_{k})^{T} \hat{\eta_{k}} \le \sigma_{max}^{2}(f'_{\tau_{k}}(x_{k})) \Vert \hat{\eta_{k}} \Vert_2^{2},
	\nonumber
\end{equation}
where $\sigma_{max}(f'_{\tau_{k}}(x_{k}))$ is the largest singular value of submatrix $f'_{\tau_{k}}(x_{k})$ of the Jacobian matrix $f'(x_{k})$.
From the definition of $\eta_{k}$, we have
\begin{equation}
	\begin{aligned}
		\eta_{k}^{T}(-f(x_{k})) &= \left(\sum_{i \in \tau_{k}} (- f_{i}(x_{k}))e_{i}^{T}\right)(-f(x_{k})) \\
		&= \sum_{i \in \tau_{k}}f_{i}(x_{k})e_{i}^{T}(f(x_{k})) \\
		&= \sum_{i \in \tau_{k}} \vert f_{i}(x_{k}) \vert^{2} \\
		&= \Vert \hat{\eta_{k}} \Vert_2^{2}.
		\nonumber
	\end{aligned}
\end{equation}
From the definition of $\tau_{k}$, we have
\begin{equation}
	\begin{aligned}
		\frac{\vert \eta_{k}^{T}(-f(x_{k})) \vert^{2}}{\Vert f'(x_{k})^{T}\eta_{k} \Vert_2^{2}} &= \frac{\left( \sum_{i \in \tau_{k}} \vert f_{i}(x_{k}) \vert^{2}\right) \Vert \hat{\eta_{k}} \Vert_2^{2}}{\Vert f'_{\tau_{k}}(x_{k})^{T} \hat{\eta_{k}} \Vert_2^{2}}\\
		&\ge \frac{\sum_{i \in \tau_{k}} \vert f_{i}(x_{k}) \vert^{2}}{\sigma_{max}^{2}(f'_{\tau_{k}}(x_{k}))}\\
		&\ge \frac{ \sum_{i \in \tau_{k}}\delta_{k} \Vert f(x_{k}) \Vert_2^{2}}{\sigma_{max}^{2}(f'_{\tau_{k}}(x_{k}))}\\
		&= \frac{\delta_{k} \vert \tau_{k} \vert}{\sigma_{max}^{2}(f'_{\tau_{k}}(x_{k}))} \Vert f(x_{k}) - f(x_{\ast}) \Vert_2^{2}.
		\nonumber
	\end{aligned}
\end{equation}
\begin{equation}
	\begin{aligned}
		\frac{\vert \eta_{k}^{T}f(x_{k}) \vert^{2}}{\Vert f'(x_{k})^{T}\eta_{k} \Vert_2^{2}} &\ge \frac{\delta_{k} \vert \tau_{k} \vert}{\sigma_{max}^{2}(f'_{\tau_{k}}(x_{k}))} \cdot \frac{1}{1+\xi^{2}} \Vert f'(x_{k})(x_{k}-x_{\ast}) \Vert_2^{2}\\
		&\ge \frac{\delta_{k} \vert \tau_{k} \vert}{\sigma_{max}^{2}(f'_{\tau_{k}}(x_{k}))} \cdot \frac{1}{1+\xi^{2}} \cdot \sigma_{min}^{2}(f'(x_{k})) \Vert x_{k} - x_{\ast} \Vert_2^{2}.
		\nonumber
	\end{aligned}
\end{equation}
Further, using Lemma \ref{yinli2}, we can obtain
\begin{equation}
	\begin{aligned}
		\Vert x_{k+1} - x_{\ast} \Vert_2^{2} &\le \Vert x_{k} - x_{\ast} \Vert_2^{2} - \frac{(1-2\xi)\delta_{k}\vert \tau_{k} \vert \sigma_{min}^{2}(f'(x_{k}))}{(1+\xi^{2})\sigma_{max}^{2}(f'_{\tau_{k}}(x_{k}))} \Vert x_{k} - x_{\ast} \Vert_2^{2}\\
		&=\left( 1 - \frac{(1-2\xi)\delta_{k}\vert \tau_{k} \vert \sigma_{min}^{2}(f'(x_{k}))}{(1+\xi^{2})\sigma_{max}^{2}(f'_{\tau_{k}}(x_{k}))}\right)\Vert x_{k} - x_{\ast} \Vert_2^{2}.
	\end{aligned}\notag
\end{equation}
So, the convergence of NGABK is proved.
\end{proof}
\begin{remark}
	\rm Since $\Vert x_{k+1} - x_{\ast} \Vert_2^{2} \ge 0$, we have $\rho_{NGABK} \ge 0$. In addition, we have $\sigma_{max}^{2}(f'_{\tau_{k}}(x_{k})) = \Vert f'_{\tau_{k}}(x_{k}) \Vert_2^{2} \le \Vert f'(x_{k}) \Vert_F^{2}$ and  $\delta_{k} = \frac{1}{2}(\frac{\max\limits_{i\in m}\lvert f_{i}(x_{k}) \rvert^{2}}{\Vert f(x_{k}) \Vert_2^{2}}+\frac{1}{m}) \ge \frac{1}{m}$, so 
	\begin{equation}
		\rho_{NGABK} = 1-\dfrac{1-2\xi}{1+\xi^{2}}\dfrac{\delta_{k}\vert \tau_{k} \vert \sigma_{min}^{2}(f'(x_{k}))}{\Vert f'_{\tau_{k}}(x_{k}) \Vert_2^{2}} < 1-\frac{1-2\xi}{(1+\xi)^2}\dfrac{\sigma_{min}^{2}(f'(x_{k}))}{m \Vert f'(x_{k})\Vert_F^{2}} = \rho_{NRK}. \nonumber
	\end{equation}
This shows that the convergence factor of our method is strictly smaller than that of NRK method.
\end{remark}
Now, we give the convergence theorem of Algorithm \ref{alg:algorithm2}.
\begin{theorem}
	If the nonlinear function f satisfies the local tangential cone condition given in Definition 1, $\xi = \max\limits_{i\in [m]} \xi_{i} < \dfrac{1}{2}$, $f(x_{\ast}) = 0$, and $f'(x)$ is a full column rank matrix, then the iterations of the MRNABK method in Algorithm \ref{alg:algorithm2} satisfy
		\begin{equation}
		\Vert x_{k+1}-x_{\ast} \Vert_2^{2} \le \left( 1-\dfrac{1-2\xi}{1+\xi^{2}}\dfrac{\varrho \vert \tau_{k} \vert \sigma_{min}^{2}(f'(x_{k}))}{m\sigma_{max}^{2}(f'_{\tau_{k}}(x_{k}))} \right)\Vert x_{k}-x_{\ast} \Vert_2^{2}.
	\end{equation}
\end{theorem}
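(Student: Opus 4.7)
The plan is to mirror the proof of the NGABK theorem almost verbatim, with the only genuine change being how the index set $\tau_k$ (defined now by the maximum residual rule) is exploited to bound $\sum_{i\in\tau_k}|f_i(x_k)|^2$ from below by a multiple of $\|f(x_k)\|_2^2$. So I would first invoke Lemma \ref{yinli2} directly, which already reduces the problem to producing a lower bound on $|\eta_k^T f(x_k)|^2 / \|f'(x_k)^T\eta_k\|_2^2$.

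Next I would reuse the matrix $E_k\in\mathbb{R}^{m\times|\tau_k|}$ whose columns are the standard basis vectors $e_i$ for $i\in\tau_k$, and set $\hat\eta_k = E_k^T\eta_k$ and $f'_{\tau_k}(x_k) = E_k^T f'(x_k)$. Exactly as in the NGABK proof, one gets $\|\hat\eta_k\|_2^2 = \sum_{i\in\tau_k}|f_i(x_k)|^2$, the identity $\|f'(x_k)^T\eta_k\|_2^2 = \|f'_{\tau_k}(x_k)^T\hat\eta_k\|_2^2$, and the upper bound $\|f'_{\tau_k}(x_k)^T\hat\eta_k\|_2^2 \le \sigma_{\max}^2(f'_{\tau_k}(x_k))\|\hat\eta_k\|_2^2$. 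Combined with $\eta_k^T f(x_k) = -\sum_{i\in\tau_k}|f_i(x_k)|^2$, this yields
\begin{equation}
\frac{|\eta_k^T f(x_k)|^2}{\|f'(x_k)^T\eta_k\|_2^2} \ge \frac{\sum_{i\in\tau_k}|f_i(x_k)|^2}{\sigma_{\max}^2(f'_{\tau_k}(x_k))}. \notag
\end{equation}

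The step that differs from the NGABK proof — and is really the crux — is the lower bound on $\sum_{i\in\tau_k}|f_i(x_k)|^2$. For every $i\in\tau_k$ the MRNABK selection rule gives $|f_i(x_k)|^2 \ge \varrho\,\max_{1\le j\le m}|f_j(x_k)|^2$, and the elementary inequality $\max_j|f_j(x_k)|^2 \ge \|f(x_k)\|_2^2/m$ then yields
\begin{equation}
\sum_{i\in\tau_k}|f_i(x_k)|^2 \;\ge\; \varrho\,|\tau_k|\,\max_{1\le j\le m}|f_j(x_k)|^2 \;\ge\; \frac{\varrho\,|\tau_k|}{m}\,\|f(x_k)\|_2^2. \notag
\end{equation}
This is the ingredient that produces the factor $\varrho|\tau_k|/m$ in the final rate, in place of $\delta_k|\tau_k|$.

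Finally I would close the argument exactly as in the NGABK case: since $f(x_\ast)=0$, Lemma \ref{yinli1} gives $\|f(x_k)\|_2^2 = \|f(x_k)-f(x_\ast)\|_2^2 \ge \frac{1}{1+\xi^2}\|f'(x_k)(x_k-x_\ast)\|_2^2$, and the full column rank of $f'(x_k)$ gives $\|f'(x_k)(x_k-x_\ast)\|_2^2 \ge \sigma_{\min}^2(f'(x_k))\|x_k-x_\ast\|_2^2$. Plugging the resulting chain of inequalities into Lemma \ref{yinli2} produces the stated contraction factor. I don't anticipate any real obstacle: the proof is a direct adaptation of the NGABK convergence proof, and the only substantive new observation is that, under the maximum-residual selection, the $1/m$ factor enters through the inequality $\max_j|f_j(x_k)|^2\ge \|f(x_k)\|_2^2/m$ applied to every index retained in $\tau_k$.
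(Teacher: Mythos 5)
Your proposal is correct and follows essentially the same route as the paper's own proof: the paper likewise reuses the $E_k$, $\hat\eta_k$, $f'_{\tau_k}(x_k)$ machinery from the NGABK argument, lower-bounds $\sum_{i\in\tau_k}|f_i(x_k)|^2$ by $\varrho|\tau_k|\max_j|f_j(x_k)|^2 \ge \frac{\varrho|\tau_k|}{m}\Vert f(x_k)\Vert_2^2$, and then applies Lemma \ref{yinli1}, the full-column-rank bound, and Lemma \ref{yinli2} in the same order. No gaps.
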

\begin{proof}
	Following an analogous proof process to the NGABK method, we can get the following formula
	\begin{equation}
		\begin{aligned}
			\frac{\vert \eta_{k}^{T}(-f(x_{k})) \vert^{2}}{\Vert f'(x_{k})^{T}\eta_{k} \Vert_2^{2}} &= \frac{\left( \sum_{i \in \tau_{k}} \vert f_{i}(x_{k}) \vert^{2}\right) \Vert \hat{\eta_{k}} \Vert_2^{2}}{\Vert f'_{\tau_{k}}(x_{k})^{T} \hat{\eta_{k}} \Vert_2^{2}}\\
			&\ge \frac{\sum_{i \in \tau_{k}} \vert f_{i}(x_{k}) \vert^{2}}{\sigma_{max}^{2}(f'_{\tau_{k}}(x_{k}))}\\
			&\ge \frac{ \sum_{i \in \tau_{k}}\varrho \max \limits_{1\le i \le m}\vert f_{i}(x_{k}) \vert^{2}}{\sigma_{max}^{2}(f'_{\tau_{k}}(x_{k}))}\\
			&\ge \frac{\varrho \vert \tau_{k} \vert}{m\sigma_{max}^{2}(f'_{\tau_{k}}(x_{k}))} \Vert f(x_{k}) - f(x_{\ast}) \Vert_2^{2}.
			\nonumber
		\end{aligned}
	\end{equation}
\indent The second inequality follows from the definition of $\tau_{k}$. Using $\max \limits_{1\le i \le m}\vert f_{i}(x_{k}) \vert^{2} \ge \frac{1}{m}\Vert f(x_{k})\Vert_2^{2}$, we can get the third inequality.\\
\indent From Lemma \ref{yinli1}, it follows that
\begin{equation}
	\begin{aligned}
		\frac{\vert \eta_{k}^{T}f(x_{k}) \vert^{2}}{\Vert f'(x_{k})^{T}\eta_{k} \Vert_2^{2}} &\ge \frac{\varrho \vert \tau_{k} \vert}{m\sigma_{max}^{2}(f'_{\tau_{k}}(x_{k}))} \cdot \frac{1}{1+\xi^{2}} \Vert f'(x_{k})(x_{k}-x_{\ast}) \Vert_2^{2}\\
		&\ge \frac{\varrho \vert \tau_{k} \vert}{m\sigma_{max}^{2}(f'_{\tau_{k}}(x_{k}))} \cdot \frac{1}{1+\xi^{2}} \cdot \sigma_{min}^{2}(f'(x_{k})) \Vert x_{k} - x_{\ast} \Vert_2^{2}.
		\nonumber
	\end{aligned}
\end{equation}
Further, using Lemma \ref{yinli2}, we can obtain
\begin{equation}
	\begin{aligned}
		\Vert x_{k+1} - x_{\ast} \Vert_2^{2} &\le \Vert x_{k} - x_{\ast} \Vert_2^{2} - \frac{(1-2\xi)\varrho\vert \tau_{k} \vert \sigma_{min}^{2}(f'(x_{k}))}{(1+\xi^{2})m\sigma_{max}^{2}(f'_{\tau_{k}}(x_{k}))} \Vert x_{k} - x_{\ast} \Vert_2^{2}\\
		&=\left( 1 - \frac{(1-2\xi)\varrho \vert \tau_{k} \vert \sigma_{min}^{2}(f'(x_{k}))}{(1+\xi^{2})m\sigma_{max}^{2}(f'_{\tau_{k}}(x_{k}))}\right)\Vert x_{k} - x_{\ast} \Vert_2^{2}.
		\nonumber
	\end{aligned}
\end{equation}
\indent So, the convergence of MRNABK is proved.
\end{proof}
	\section{Numerical examples}\label{section5}
	In this section, we mainly compare the efficiency of our new methods with NRK, the residual-distance capped nonlinear Kaczmarz (RD-CNK) and the residual-based block capped nonlinear Kaczmarz (RB-CNK) for solving the nonlinear systems of equations in the iteration numbers (denoted as ‘IT’) and computation time (denoted as ‘CPU’). RD-CNK and NRK are based on a single sample. RB-CNK is based on multi-sampling and uses the following iteration scheme: 
	\begin{equation}
		x_{k+1} = x_{k} - (f'_{\mathcal{I}_{k}}(x_{k}))^{\dagger} f_{\mathcal{I}_{k}}(x_{k}),
		\nonumber
	\end{equation}
where $\mathcal{I}_{k}$ is the selected index subset. The target block in MRNABK is caculated by
\begin{equation}
	\mathcal{I}_{k} = \{ i_{k} \vert \ \vert r_{i_{k}} \vert^{2} \ge \varrho \max _{1\le i \le m}\vert r(i) \vert^{2} \}
	\nonumber
\end{equation} 
where the parameter $\varrho$ in the experiment is set to 0.1.\\
\indent In the numerical experiment, the IT and CPU are the average of the results of 10 times repeated runs of the corresponding method. All experiments are terminated when the number of iterations exceeds 200,000 or $\Vert f(x_{k}) \Vert_2^{2} < 10^{-6}$. Our experiment is implemented on MATLAB (version R2018b).\\
\begin{example}
	\rm In this example, we consider the following equations
	\begin{equation}
		F_{i}(x) = x_{i} - (1 - \dfrac{c}{2N} \sum_{j=1}^{N} \frac{\mu_{i}x_{j}}{\mu_{i} + \mu_{j}})^{-1}.
		\nonumber
	\end{equation}
	\indent The system of the equations is called H-equation, which is usually used to solve the problem of outlet distribution in radiation transmission. In this problem, N represents the number of equations and $\mu_{i} = (i-\frac{1}{2})/N$. We set $x_{0}$ be zero vectors and $c=0.9$. First of all, We test the value of parameter $\varrho$. In Table \ref{tab1}, we observe that the computation time of MRNABK is relatively low in most cases, when $\varrho=0.1, 0.2, 0.3$. When the number of equations is fixed, we can find that the larger $\varrho$ is, the longer the calculation time of MRNABK will be. Next, we test the performance of our methods and other methods. The results of numerical experiments are listed in Table \ref{tab2} and Table \ref{tab3}. As can be seen from Fig. \ref{fig:1}, RD-CNK and RB-CNK methods based on single sampling are significantly slower than RB-CNK and NGABK methods based on multiple sampling. NGABK method is slightly better than RB-CNK method in iteration time and iteration times from Fig. \ref{fig:2}. From Table \ref{tabMRNABKIT}, MRNABK converges faster than NGABK in terms of computing time and iteration steps.
	
	\begin{table}
		\centering
		\caption{CPU of MRNABK for Example 1 with $c=0.9$, $x_{0}=0$ and different $\varrho$}
		\label{tab1}
		\begin{tabular}{ccccccc}
			\hline
			$\varrho$& 0.1& 0.3& 0.5& 0.7& 0.8& 0.9\\
			\hline
			$m=50$& 0.018& 0.0260& 0.0218& 0.0282& 0.0334& 0.0441\\
			$m=100$& 0.0958& 0.0863& 0.1607& 0.1440& 0.2222& 0.1946\\
			$m=500$& 1.2712& 1.2321& 1.3973& 1.5922& 1.7479& 2.0144\\
			$m=1000$& 3.9016& 4.1044& 4.8689& 5.1860& 5.9891& 6.7390\\
			$m=1500$& 8.2839& 8.9858& 10.4272& 11.0643& 11.8178& 13.9577\\
			\hline
		\end{tabular}	
	\end{table}
	
	\begin{table}
		\centering
		\caption{IT comparison of NRK, RD-CNK, NGABK, RB-CNK, MRNABK}
		\label{tab2}
		\begin{tabular}{cccccc}
			\hline
			$n$& NRK& RD-CNK& NGABK& RB-CNK& MRNABK\\
			\hline
			$x_{0} = zero(50,1)$& 970& 864& 70& 62& 21\\
			$x_{0} = zero(100,1)$& 2022& 1814& 66& 66& 21\\
			$x_{0} = zero(300,1)$& 6518& 5838& 72& 76& 24\\
			$x_{0} = zero(500,1)$& 11239& 10027& 78& 81& 24\\
			\hline
		\end{tabular}	
	\end{table}
	
	\begin{table}
		\centering
		\caption{CPU comparison of NRK, RD-CNK, NGABK, RB-CNK, MRNABK}
		\label{tab3}
		\begin{tabular}{cccccc}
			\hline
			$n$& NRK& RD-CNK& NGABK& RB-CNK& MRNABK\\
			\hline
			$x_{0} = zero(50,1)$& 0.2646& 0.5149& 0.0640& 0.0989& 0.0593\\
			$x_{0} = zero(100,1)$& 0.4751& 1.6045& 0.1064& 0.1680& 0.0754\\
			$x_{0} = zero(300,1)$& 4.3069& 26.9104& 0.6370& 0.8359& 0.4770\\
			$x_{0} = zero(500,1)$& 12.2161& 95.7063& 1.4577& 1.9005& 1.0813\\
			\hline
		\end{tabular}	
	\end{table}
	
	\begin{table}
		\centering
		\caption{IT and CPU comparison of MRNABK and NGABK with $\varrho=0.1$ and $c=0.9$}
		\label{tabMRNABKIT}
		\begin{tabular}{ccccc}
			\hline
			$n$& MRNABK(IT)& NGABK(IT)& MRNABK(CPU)& NGABK(CPU)\\
			\hline
			$50 \times 50$& 21& 70& 0.0433& 0.0742\\
			$100 \times 100$& 21& 66& 0.1067& 0.1631\\
			$500 \times 500$& 24& 78& 1.6064& 1.1954\\
			$1000 \times 1000$& 25& 78& 3.8946& 5.0264\\
			\hline
		\end{tabular}	
	\end{table}

	\begin{figure}
		\centering
		\subfigure[$x_{0} = zero(100,1)$]{\includegraphics[height=5cm,width=6cm]{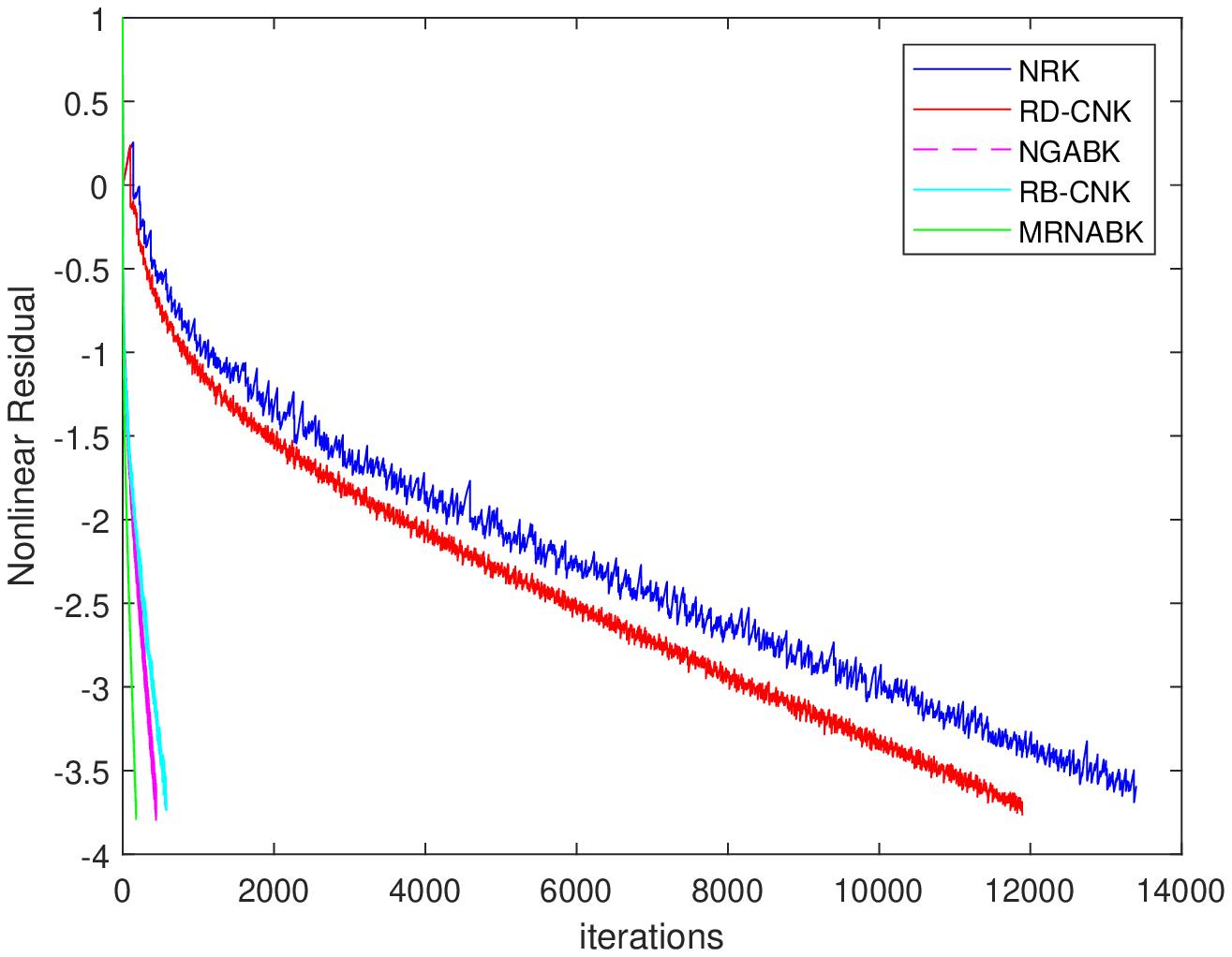}}
		\subfigure[$x_{0} = zero(300,1)$]{\includegraphics[height=5cm,width=6cm]{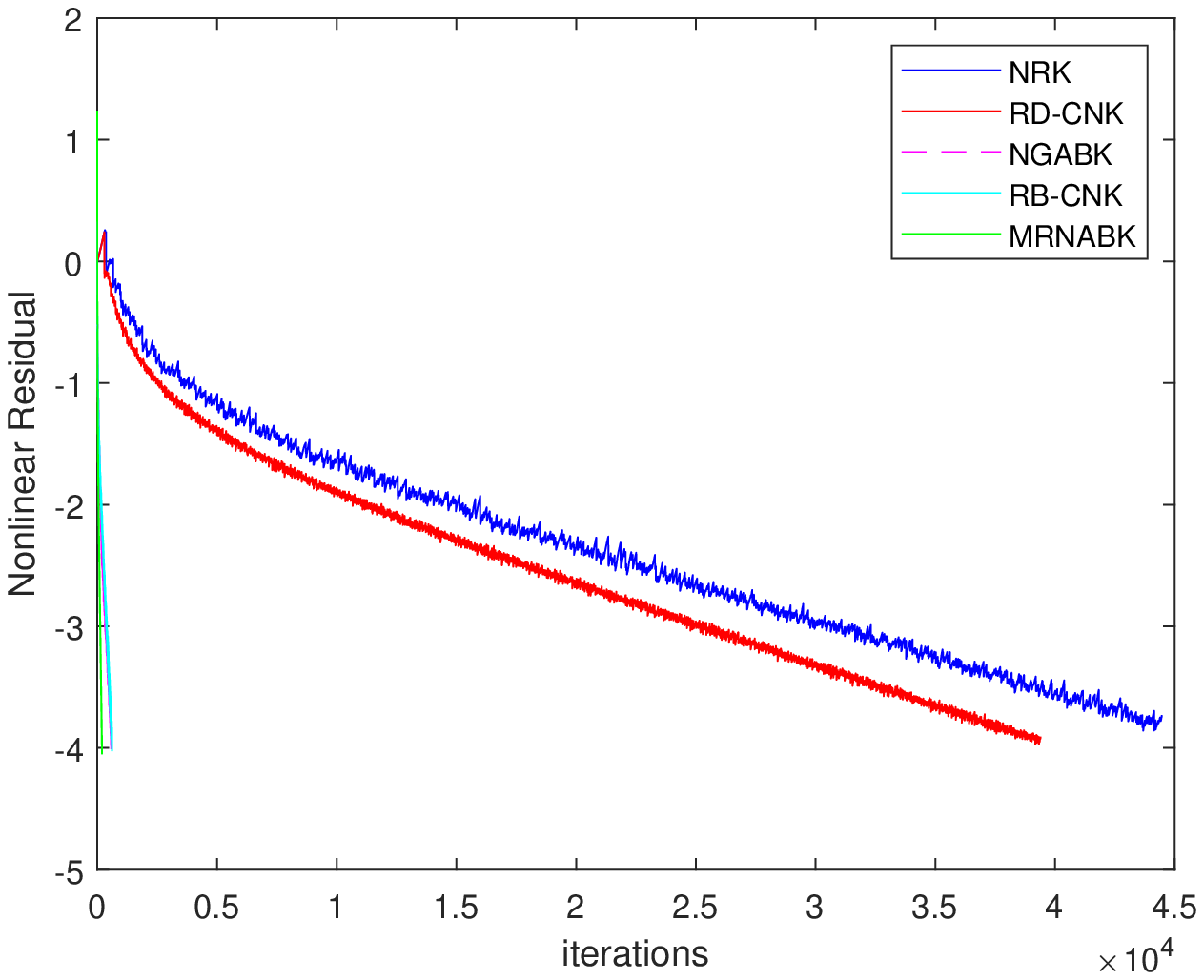}}
		\caption[d]{H-equation at different initial value.}
		\label{fig:1}
	\end{figure}
	
	\begin{figure}
		\centering
		\subfigure[CPU]{\includegraphics[height=5cm,width=6cm]{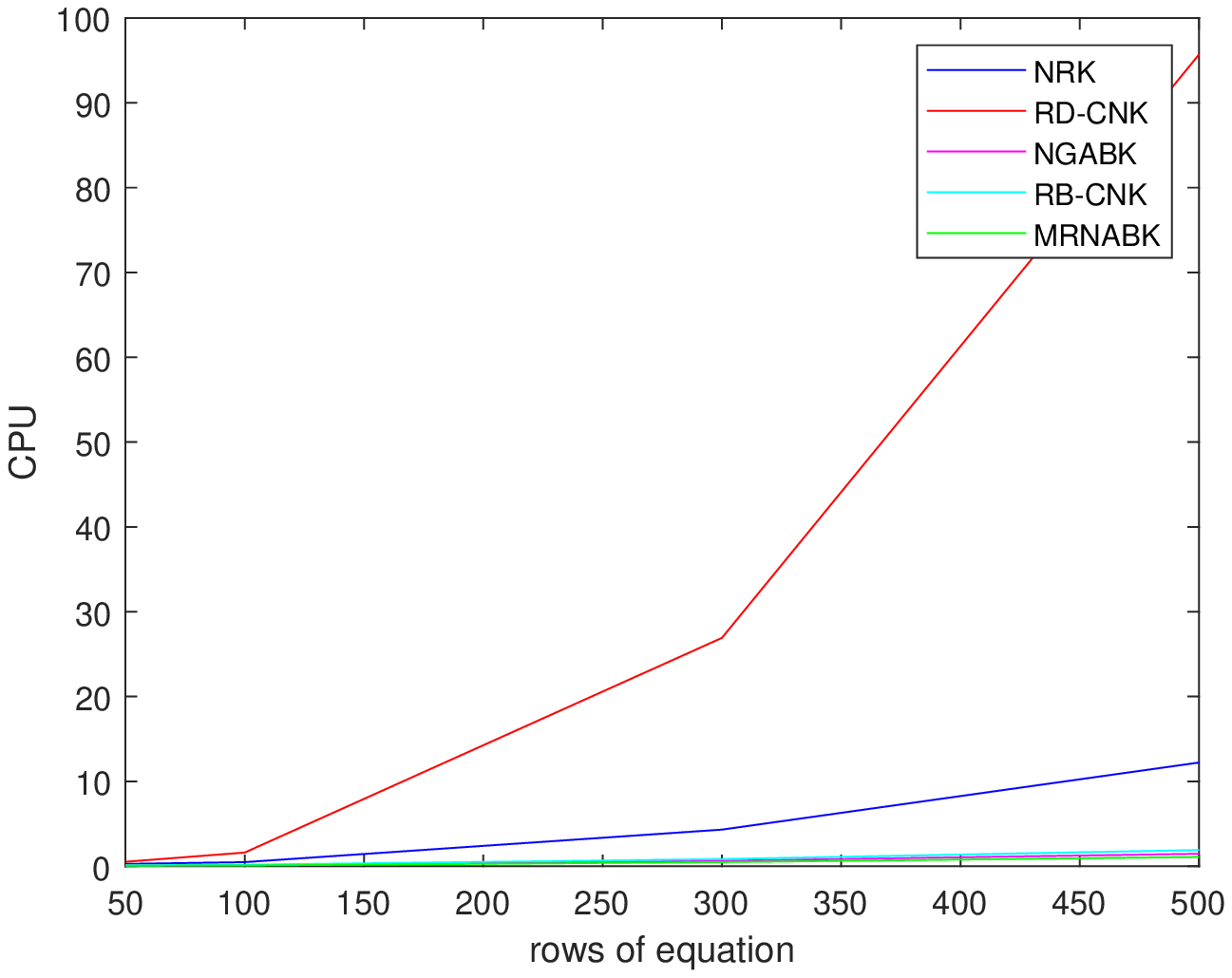}}
		\subfigure[IT]{\includegraphics[height=5cm,width=6cm]{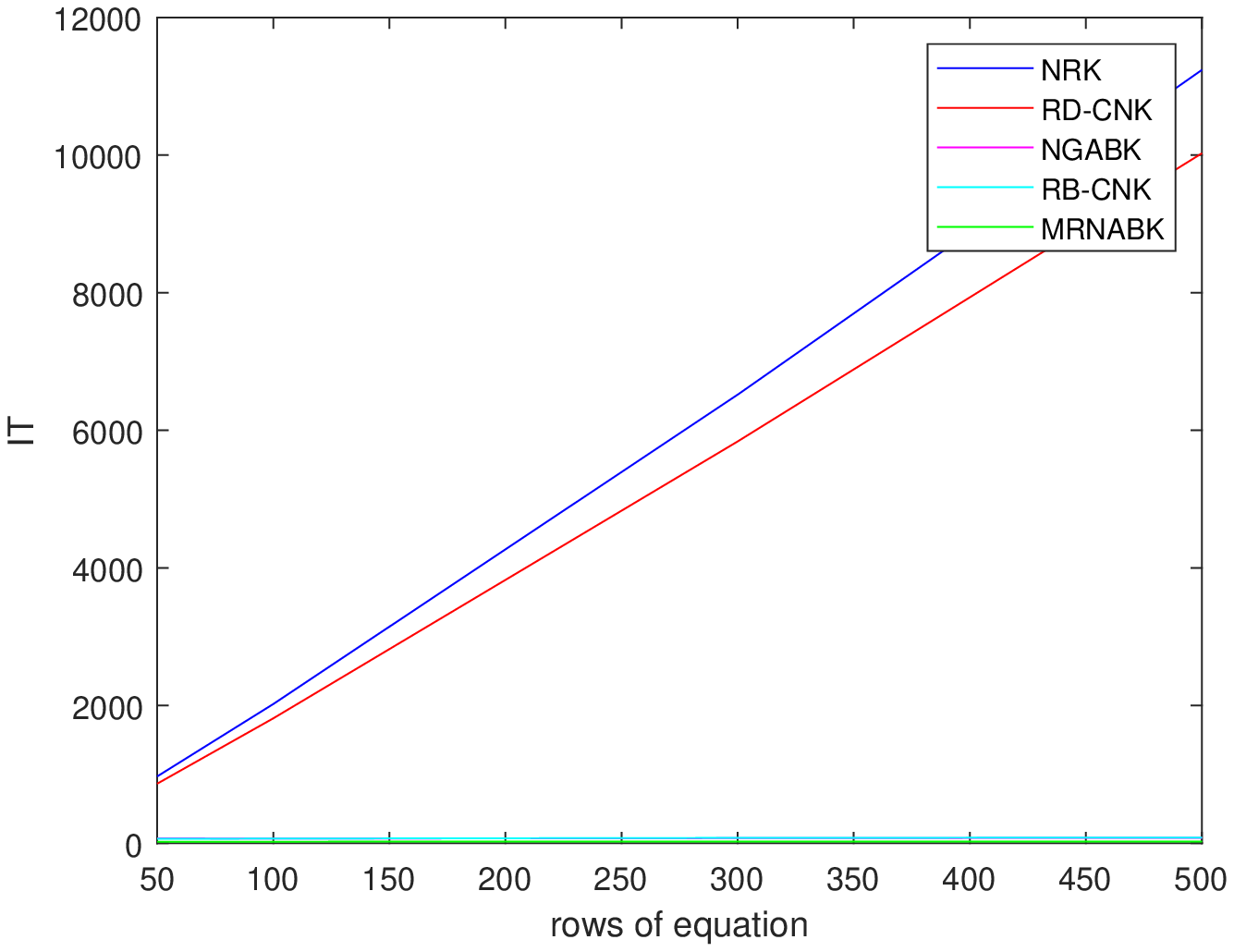}}
		\caption[d]{H-equation at different scales.}
		\label{fig:2}
	\end{figure}
	
\end{example}
\begin{example}
	\rm In this example, we consider the Brown almost linear function,
	\begin{equation}
		\begin{aligned}
		&f_{k}(x) = x^{(k)} + \sum_{i=1}^{n}x^{i} - (n+1),& 1\le k \le n; \\ \nonumber
    	&f_{k}(x) = \left( \prod \limits_{i=1}^n x^{(i)}\right) - 1, &k=n.\nonumber
    	\end{aligned}
    \end{equation}	
\indent In this experiment, we set the initial value $x_{0} = 0.5 \ast ones(n,1)$. The number of equations and the number of unknowns is set to $50\times50$, $100\times100$, $150\times150$, $200\times200$, $250\times250$, $300\times300$, $350\times350$, $400\times400$. We list the computing time and iteration numbers of these methods respectively in Table \ref{tab4} and Table \ref{tab5}. The results show that our new methods greatly outperform the NRK method. We observe that the iteration time of the NGABK method and the MRNABK method is almost the same in Table \ref{tab5}, but both of them are better than the RB-CNK method.
\begin{table}
	\centering
	\caption{IT comparison of NRK, RD-CNK, NGABK, RB-CNK, MRNABK}
	\label{tab4}
\begin{tabular}{cccccc}
	\hline
	$m \times n$& NRK& RD-CNK& NGABK& RB-CNK& MRNABK \\
	\hline
	$50 \times 50$& 4660& 755& 1& 1& 1\\
	$100 \times 100$& 15881& 1308& 1& 1& 1\\
	$150 \times 150$& 35398& 1904& 1& 1& 1\\
	$200 \times 200$& 58127& 2506& 1& 1& 1\\
	$250 \times 250$& 85937& 3128& 1& 1& 1\\
	$300 \times 300$& 116851& 3750& 1& 1& 1\\
	$350 \times 350$& 156027& 4372& 1& 1& 1\\
	$400 \times 400$& 196134& 4992& 1& 1& 1\\
	\hline
\end{tabular}	
\end{table}
\begin{table}
	\centering
	\caption{CPU comparison of NRK, RD-CNK, NGABK, RB-CNK, MRNABK}
	\label{tab5}
	\begin{tabular}{cccccc}
		\hline
		$m \times n$& NRK& RD-CNK& NGABK& RB-CNK& MRNABK \\
		\hline
		$50 \times 50$& 0.7222& 0.2290& 0.0024& 0.0049& 0.0017\\
		$100 \times 100$& 2.5929& 1.2108& 0.0050& 0.0063& 0.0045\\
		$150 \times 150$& 7.5696& 2.7863& 0.0108& 0.0149& 0.0112\\
		$200 \times 200$& 15.4563& 5.9908& 0.0187& 0.0250& 0.0188\\
		$250 \times 250$& 24.4239& 11.2057& 0.0321& 0.0446& 0.0320\\
		$300 \times 300$& 35.7111& 17.4006& 0.0630& 0.0737& 0.0530\\
		$350 \times 350$& 53.0089& 25.8307& 0.0508& 0.0775& 0.0476\\
		$400 \times 400$& 71.1793& 36.0714& 0.0626& 0.0982& 0.0699\\
		\hline
	\end{tabular}	
\end{table}
\end{example}

\begin{example}
	\rm In this example, we consider the following system of equations,
	\begin{equation}
		\begin{aligned}
		&f_{k}(x) = ((3-2x_{k})x_{k}-2x_{k+1} + 1)^{2},&k=1;\\
		&f_{k}(x) = ((3-2x_{k})x_{k}-x_{k-1}-2x_{k+1} + 1)^{2},&1<k<n;\\
		&f_{k}(x) = ((3-2x_{k})x_{k}-x_{k-1} + 1)^{2},&k=n.
		\nonumber
		\end{aligned}
	\end{equation}
\indent In this experiment, we set the initial value $x_{0} = (-0.5,-0.5,\dots,-0.5)^{T}$ and $n$ is the number of the equations. Singular Broyden problem is a square nonlinear system of equations, and its Jacobian matrix is singular at the solution. As can be seen from Table \ref{tab6} and Table \ref{tab7}, the NGABK method converges faster than the other three methods in terms of the number of iteration steps and calculation time. As can be seen from Fig. \ref{fig:3}, the residuals of MRNABK method decline the fastest, while those of NRK method decline the slowest. From Fig. \ref{fig:4}, we can observe that the approximate solutions can be obtained by all five methods.

\begin{table}
	\centering
	\caption{IT comparison of NRK, RD-CNK, NGABK, RB-CNK, MRNABK}
	\label{tab6}
	\begin{tabular}{cccccc}
		\hline
		$n$& NRK& RD-CNK& NGABK& RB-CNK& MRNABK\\
		\hline
		50& 1524& 1440& 288& 374& 33\\
		500& 18068& 17139& 4531& 6841& 33\\
		700& 25707& 24413& 4357& 10044& 34\\
		900& 33508& 31876& 4867& 13045& 33\\
		1500& 57340& 54586& 13502& 22743& 34\\
		2000& 77487& 73764& 12756& 22528& 31\\
		\hline
	\end{tabular}	
\end{table}

\begin{table}
	\centering
	\caption{CPU comparison of NRK, RD-CNK, NGABK, RB-CNK, MRNABK}
	\label{tab7}
	\begin{tabular}{cccccc}
		\hline
		$n$& NRK& RD-CNK& NGABK& RB-CNK& MRNABK\\
		\hline
		50& 0.9005& 0.4328& 0.1038& 0.1824& 0.0422\\
		500& 3.8890& 6.7895& 1.9450& 3.0060& 0.6040\\
		700& 6.0818& 12.7727& 3.1249& 4.8353& 0.8893\\
		900& 8.9058& 19.6451& 4.5572& 7.2238& 1.2652\\
		1500& 20.2697& 35.6259& 11.6104& 18.6458& 3.1587\\
		2000& 32.7492& 71.2359& 19.3939& 33.8771& 4.6496\\
		\hline
	\end{tabular}	
\end{table}

\begin{figure}
	\centering
	\subfigure[$n=500$]{\includegraphics[height=5cm,width=6cm]{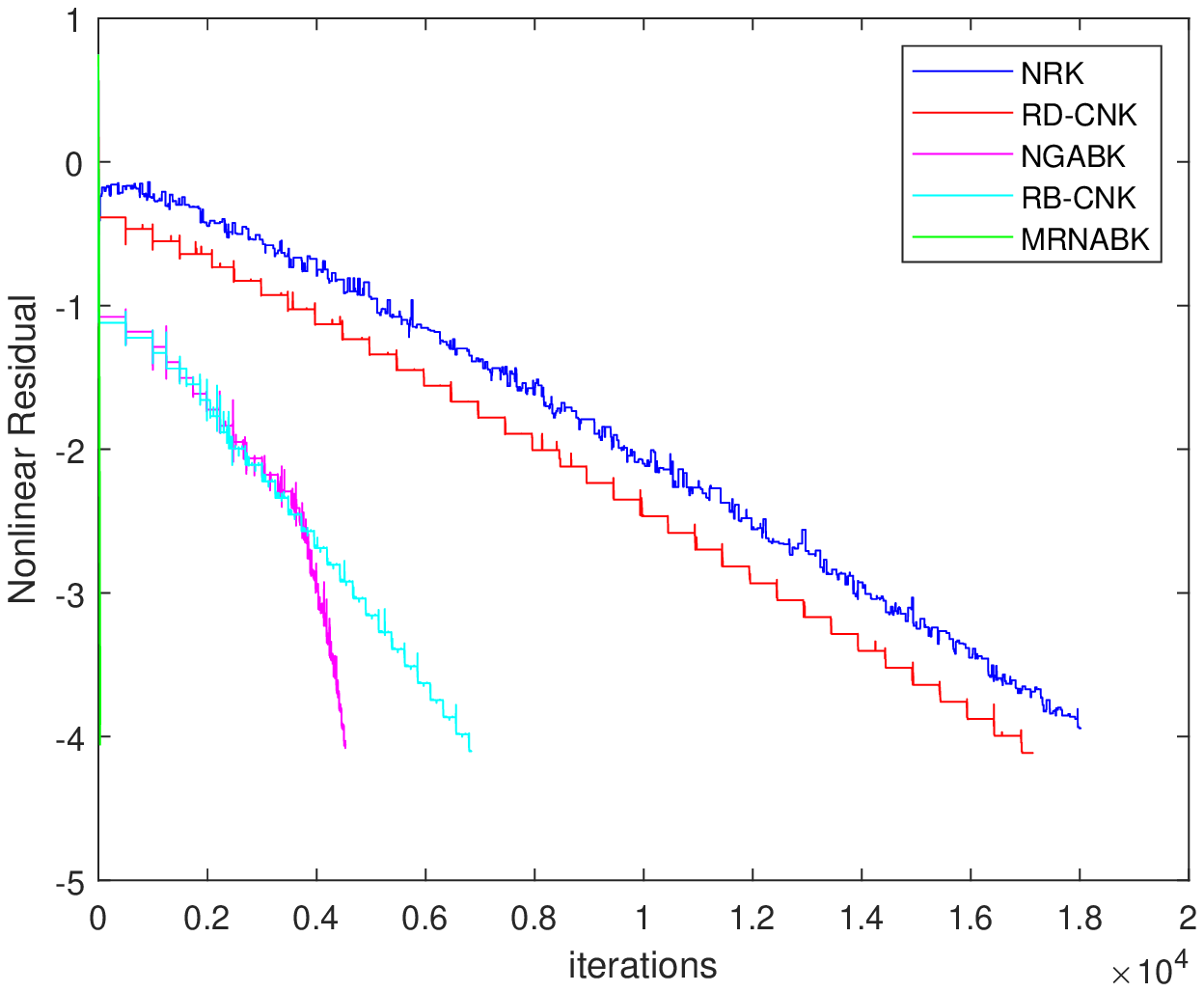}}
	\subfigure[$n=2000$]{\includegraphics[height=5cm,width=6cm]{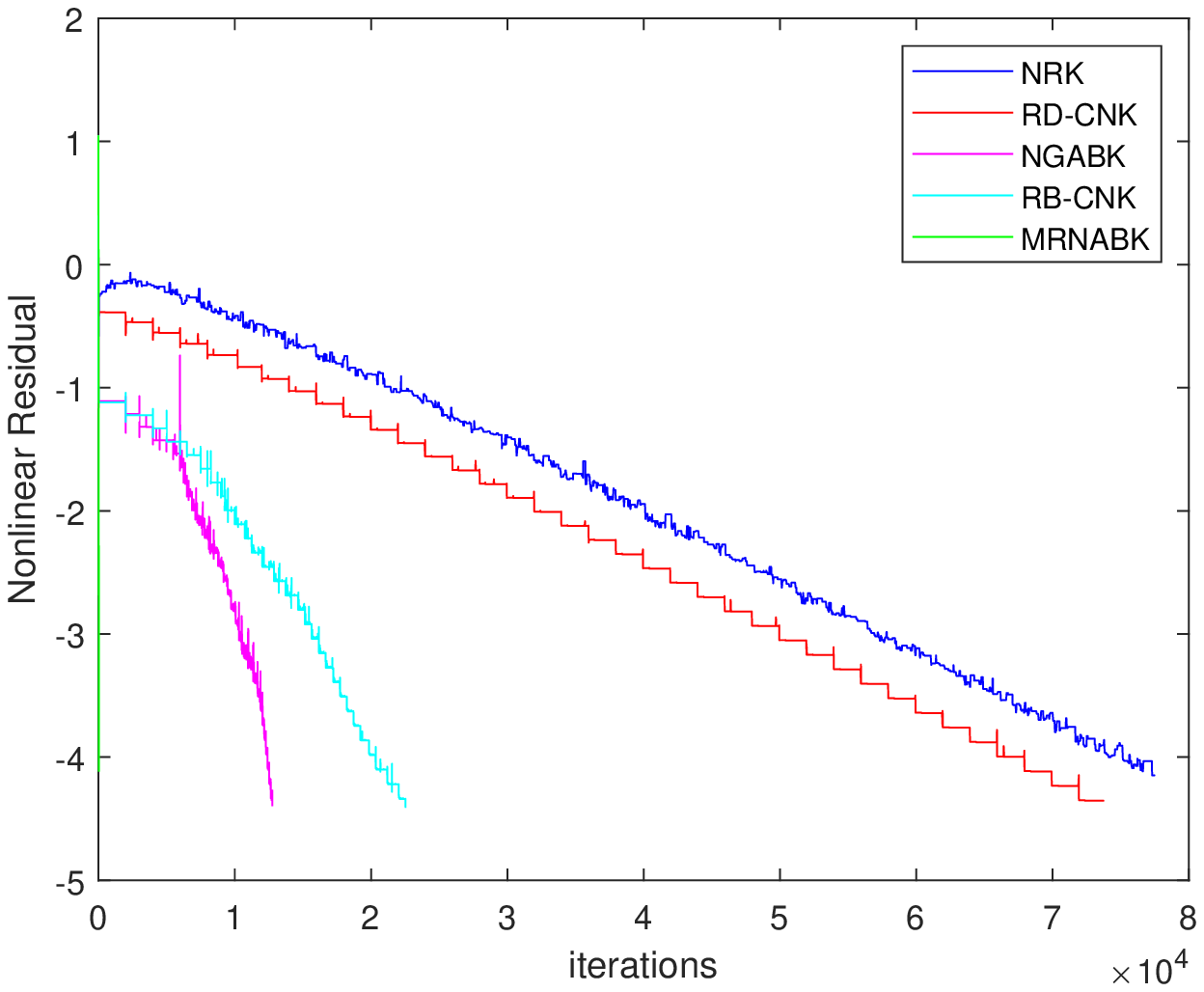}}
	\caption[d]{Singular Broyden problem with n = 500 (left), 2000 (right).}
	\label{fig:3}
\end{figure}

\begin{figure}
	\centering
	\subfigure[$n=500$]{\includegraphics[height=5cm,width=6cm]{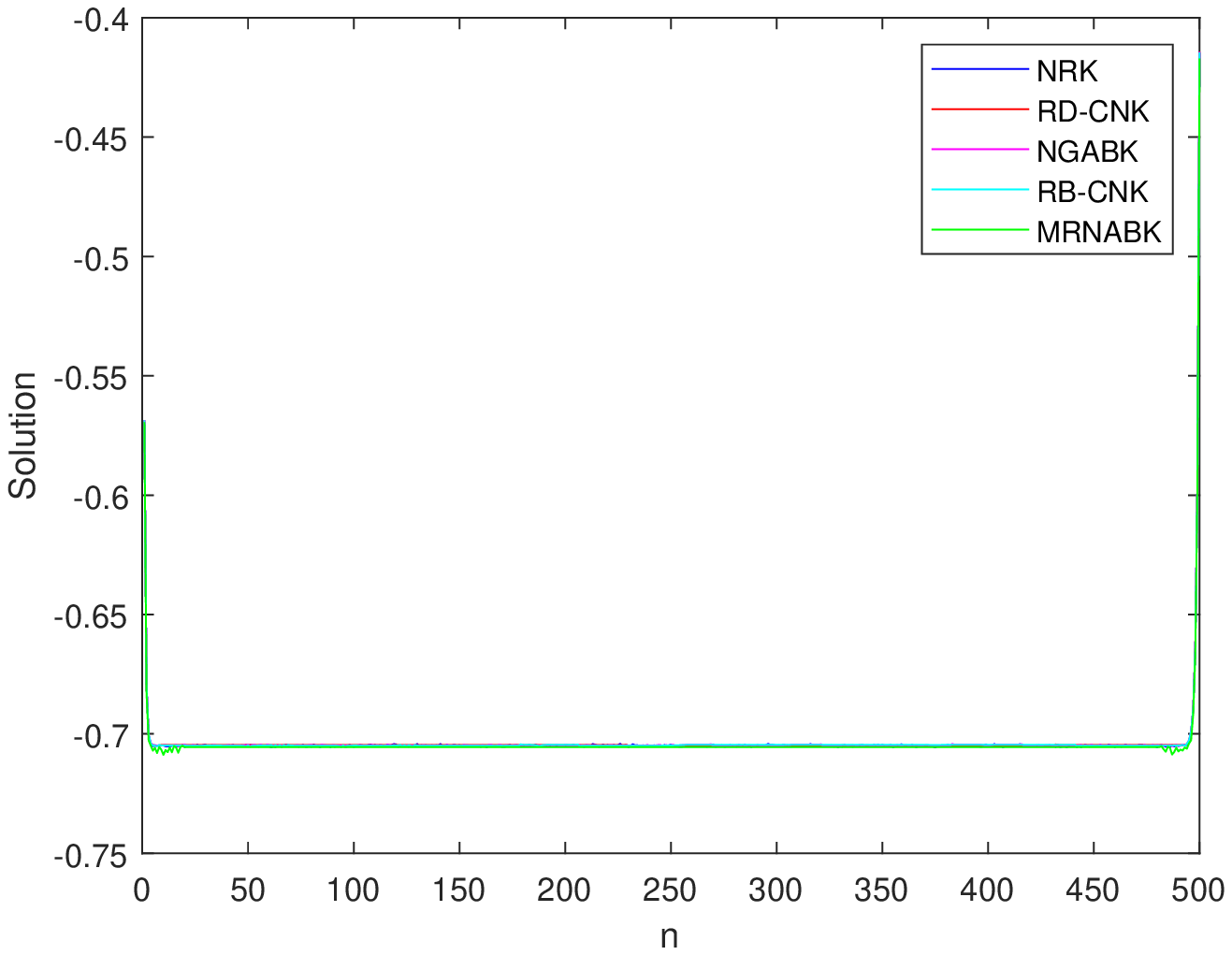}}
	\subfigure[$n=2000$]{\includegraphics[height=5cm,width=6cm]{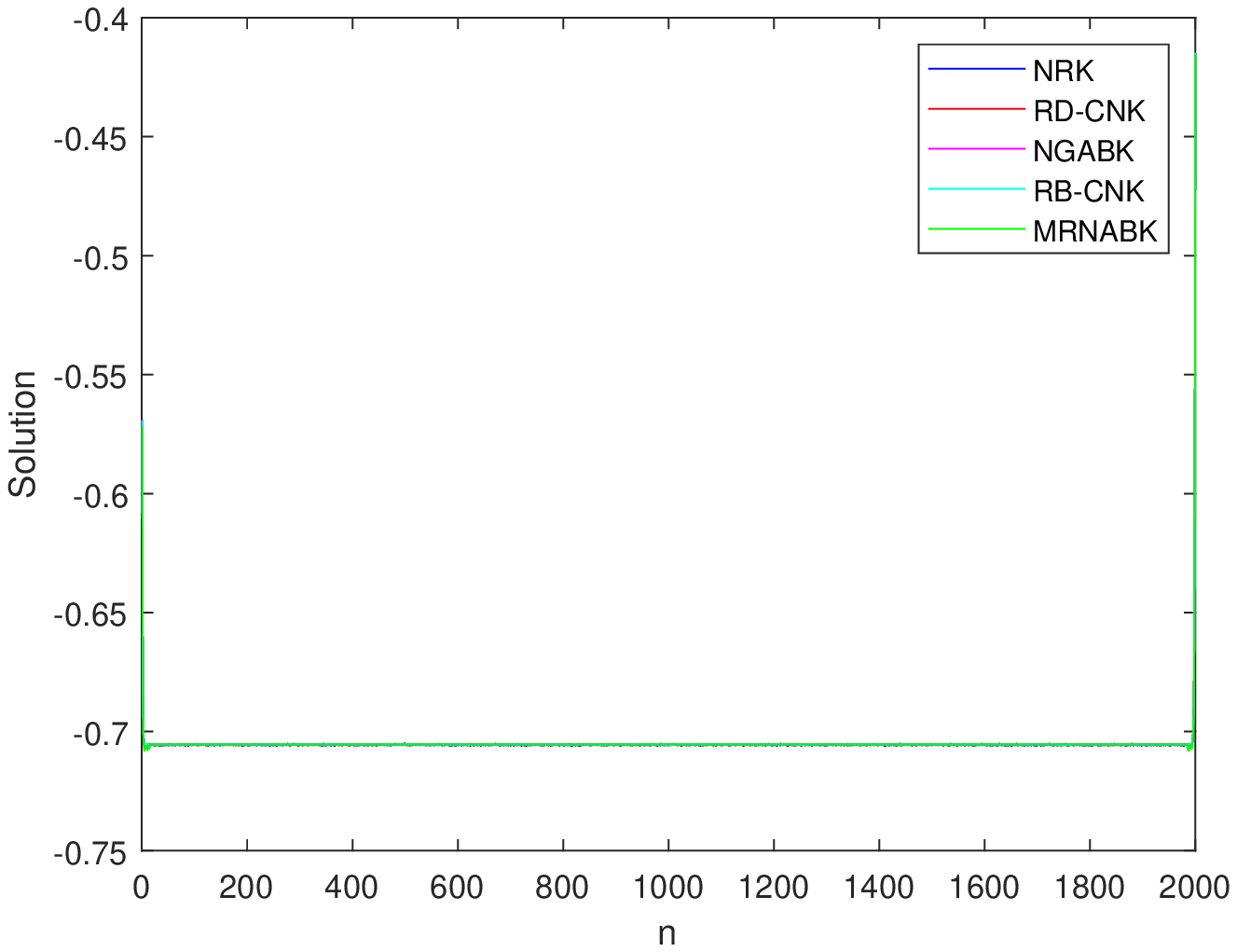}}
	\caption[d]{Singular Broyden problem's solution with n = 500 (left), 2000 (right).}
	\label{fig:4}
\end{figure}
\end{example}
\begin{example}
	\rm Consider the following overdetermined nonlinear problem,
	\begin{equation}
		\begin{aligned}
		&f_{k}(x) = 10\left(\frac{2x_{i}}{(1+(x_{i})^{2})^{2}}-x_{i+1}\right),&\quad mod(k,2)=1,\\
		&f_{k}(x) = x_{i} - 1,&\quad mod(k,2)=0,\\
		&m=2(n-1),&\quad i= div(k+1,2),
		\nonumber
		\end{aligned}
	\end{equation}
 where m represents the number of equations. In this experiment, we set the initial value $x_{0} =(0,0,\dots,0)$ and $n =$ 100, 300, 500, 1000, 2000, respectively. From Table \ref{tab8}, we can see that NRK achieves the worst numerical result compared with the other four methods. Further, we observe that RB-CNK, MRNABK and NGABK achieve the same result in terms of the number of iterations. From Table \ref{tab9}, NGABK and MRNABK are slightly faster than RB-CNK in terms of iteration time, so our approach is generally effective.
 \begin{table}
 	\centering
 	\caption{IT comparison of NRK, RD-CNK, NGABK, RB-CNK, MRNABK}
 	\label{tab8}
 	\begin{tabular}{cccccc}
 		\hline
 		$n$& NRK& RD-CNK& NGABK& RB-CNK& MRNABK\\
 		\hline
 		100& 138& 124& 2& 2& 2\\
 		300& 435& 328& 2& 2& 2\\
 		500& 706& 517& 2& 2& 2\\
 		1000& 1472& 1042& 2& 2& 2\\
 		2000& 2829& 2042& 2& 2& 2\\
 		\hline
 	\end{tabular}	
 \end{table}

\begin{table}
	\centering
	\caption{CPU comparison of NRK, RD-CNK, NGABK, RB-CNK, MRNABK}
	\label{tab9}
	\begin{tabular}{cccccc}
		\hline
		$n$& NRK& RD-CNK& NGABK& RB-CNK& MRNABK\\
		\hline
		100& 0.0186& 0.0482& 0.0094& 0.0131& 0.0085\\
		300& 0.1510& 0.2999& 0.0664& 0.0709& 0.0357\\
		500& 0.2273& 0.3888& 0.0896& 0.1357& 0.0836\\
		1000& 0.5298& 0.9602& 0.2230& 0.9057& 0.2019\\
		2000& 1.8300& 3.8600& 0.9893& 5.4136& 0.7435\\
		\hline
	\end{tabular}	
\end{table}
\end{example}
	\section{Conclusion}
	In this paper, based on the Gaussian Kaczmarz method and RD-CNK method, we propose a new class of nonlinear Kaczmarz  block methods to solve nonlinear equations and study their convergence theories. These methods use the average technique instead of calculating the Moore-Penrose pseudoinverse of the Jacobian matrix, which greatly reduces the amount of computation. Numerical results show that the NGABK method and the MRNABK method perform well in the case of singular Jacobian matrix and overdetermined equations. The study of the pseudoinverse-free method and the more efficient greedy rules is very meaningful, and this is what we need to continue to work on in the future.
	\bibliographystyle{unsrt}
	\bibliography{refenence}
\end{document}